\documentclass[11pt,epsf]{article}
\usepackage{graphicx}
\usepackage{amsthm}
\usepackage{amsfonts}
\usepackage{amssymb}
\headsep 0cm
\headheight 0cm
\topmargin 0cm
\evensidemargin 0cm
\oddsidemargin 0cm
\textheight 23cm
\textwidth 16cm
\unitlength 1mm
\title{Addendum: A separation in modulus property of the 
zeros of a partial theta function}
\author{Vladimir Petrov Kostov\\ 
Universit\'e C\^ote d'Azur, CNRS, LJAD, France,\\  
e-mail: kostov@math.unice.fr} 
\date{}
\bibliographystyle{plain} 
\newtheorem{tm}{Theorem}

\newtheorem{defi}[tm]{Definition}
\newtheorem{rem}[tm]{Remark}
\newtheorem{rems}[tm]{Remarks}
\newtheorem{lm}[tm]{Lemma}

\newtheorem{nota}[tm]{Notation}
\begin{document} 
\maketitle 
\begin{abstract}
We consider the partial theta function 
$\theta (q,z):=\sum _{j=0}^{\infty}q^{j(j+1)/2}z^j$, 
where $z\in \mathbb{C}$ is a variable and $q\in \mathbb{C}$, $0<|q|<1$,  
is a parameter. Set $D(a):=\{ q\in \mathbb{C}$, $0<|q|\leq a$, 
$\arg (q)\in [\pi /2,3\pi /2]\}$. We show that for $k\in \mathbb{N}$ and 
$q\in D(0.55)$, 
there exists exactly one zero of $\theta (q,.)$ (which is a simple one) 
in the open annulus $|q|^{-k+1/2}<z<|q|^{-k-1/2}$ (if $k\geq 2$) or in the 
punctured disk $0<z<|q|^{-3/2}$ (if $k=1$). For $k=1$, $4$, $5$, $6$, $\ldots$, 
this holds true for $q\in D(0.6)$ as well. \\

{\bf Keywords:} partial theta function; separation in modulus\\  

{\bf AMS classification:} 26A06
\end{abstract}

\section{The new results}

In this addendum to~\cite{Ko8}  
we consider the {\em partial theta function} 
$\theta (q,z):=\sum _{j=0}^{\infty}q^{j(j+1)/2}z^j$. The series converges for 
$q\in \mathbb{D}_1$, $z\in \mathbb{C}$, where $\mathbb{D}_a$ 
denotes the open disk centered at $0$ and of radius $a$. We regard $q$ as a 
parameter and $z$ as a variable. 

\begin{defi}
{\rm For $0<a<b$ and for $q\in \mathbb{D}_1\setminus 0$ fixed, 
denote by $U_{a,b}\subset \mathbb{C}$ the open annulus 
$\{ |q|^{-a}<|z|<|q|^{-b}\}$. For $q$ fixed, we say that  
{\em strong separation (in modulus)} of the zeros of $\theta$ 
takes place for $k\geq k_0$, 
if for any $k\geq k_0$, there exists a unique zero $\xi _k$ 
of $\theta (q,.)$ which is 
a simple one and}
\begin{equation}\label{eqtwosides}
|\xi _k|\in U_{k-1/2,k+1/2}~~~~\,,~~~\, {\rm if}~~~\, k\geq 2~~~\,,~~~\, 
{\rm and}~~~\, 
\xi _k\in (\mathbb{D}_{|q|^{-3/2}}\setminus 0)~~~~\, ,~~~\, {\rm if}~~~\, k=1~, 
\end{equation}
{\rm while the remaining $k_0-1$ zeros (counted with multiplicity) are 
in $\mathbb{D}_{|q|^{-k_0+1/2}}\setminus 0$. If $k_0=1$, then we say that 
the zeros of $\theta$ 
{\em are strongly separated}. (One can notice that the cases $k_0=1$ and 
$k_0=2$ are identical.)}
\end{defi}

It is shown in \cite{Ko8} that the zeros of $\theta$ are strongly separated  
for $0<|q|\leq c_0:=0.2078750206\ldots$, 
see Lemma~1 and its proof therein. 
Set $D(a):=\{ q\in \mathbb{C}$, $0<|q|\leq a$, 
$\arg (q)\in [\pi /2,3\pi /2]\}$.

\begin{tm}\label{maintmadd}
(1) For $q\in D(0.55)$, the zeros of $\theta$ are strongly separated. 

(2) For $q\in D(0.6)$, conditions (\ref{eqtwosides}) hold true for 
$k=1$, $4$, $5$, $6$, $\ldots$, and there are exactly two zeros of $\theta$ 
(counted with multiplicity) in $U_{3/2,7/2}$.  
\end{tm}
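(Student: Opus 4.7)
The plan is to apply Rouch\'e's theorem on the circles $C_k := \{|z| = |q|^{-k-1/2}\}$ for appropriate $k$. On $C_k$ the $k$-th monomial $T_k(q,z) := q^{k(k+1)/2}z^k$ of the series is the one of largest modulus: writing $\theta/T_k$ as a Laurent series in $m := j - k$, its $m$-th coefficient $q^{m(m+2k+1)/2}$ has modulus exactly $|q|^{m^2/2}$ on $C_k$. The target inequality
\[
|\theta(q,z) - T_k(q,z)| < |T_k(q,z)| \qquad (z \in C_k),
\]
if established, gives by Rouch\'e that $\theta(q,\cdot)$ has exactly $k$ zeros in $\mathbb{D}_{|q|^{-k-1/2}}$; differencing these counts over successive $k$ yields a unique simple zero in each annulus $U_{k-1/2,k+1/2}$, and the single zero in $\mathbb{D}_{|q|^{-3/2}}\setminus 0$ for $k = 1$.

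The main obstacle is that the naive triangle-inequality bound $\sum_{m \neq 0}|q|^{m^2/2} < 1$ already fails beyond $|q| = c_0 = 0.2078\ldots$, the range treated in \cite{Ko8}. To push the estimate to $|q| \leq 0.55$ one must exploit the hypothesis $\arg(q) \in [\pi/2, 3\pi/2]$. A natural device is symmetric pair grouping around the dominant term: combine the two Laurent terms with indices $m = -n$ and $m = n$ (for $1 \leq n \leq k$) into a single summand. A short computation shows that this pair contributes to $\theta(q,z)/T_k(q,z) - 1$ a quantity $|q|^{n^2/2}(\omega_{-n} + \omega_n)$ with $|\omega_{\pm n}| = 1$ and
\[
|\omega_{-n} + \omega_n| = 2\bigl|\cos\bigl(\tfrac{1}{2}n(2k+1)\arg(q) + n\arg(z)\bigr)\bigr|.
\]
The restriction $\arg(q) \in [\pi/2, 3\pi/2]$ spreads these cosines over a range that should force
\[
2\sum_{n=1}^{k}|q|^{n^2/2}\bigl|\cos\bigl(\tfrac{1}{2}n(2k+1)\arg(q) + n\arg(z)\bigr)\bigr| + \sum_{n > k}|q|^{n^2/2} < 1
\]
uniformly in $\arg(z) \in [0,2\pi)$.

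With the pair inequality verified for all $k \geq 2$ and $|q| \leq 0.55$ (the $k = 1$ case admitting a parallel treatment on $C_1$ with the pair $(m = -1, m = 1)$ playing the same role), part (1) follows. For $0.55 < |q| \leq 0.6$ the borderline fails only on $C_2$, since on that circle the negative-$m$ side of the Laurent sum carries only the two indices $m \in \{-2, -1\}$ and the symmetric cancellations are weakest. Rouch\'e on $C_1$, $C_3$ and each $C_k$ ($k \geq 4$) still closes; the resulting zero counts $1$, $3$ and $k$ in the corresponding disks produce strong separation for $k \in \{1, 4, 5, \ldots\}$ and exactly $3 - 1 = 2$ zeros in the merged annulus $U_{3/2, 7/2}$, proving part (2). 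The heaviest technical step will be rigorously verifying the cosine-bound uniformly in $\arg(q) \in [\pi/2, 3\pi/2]$ and $\arg(z) \in [0, 2\pi)$, and pinning down the precise numerical thresholds at $0.55$ and $0.6$.
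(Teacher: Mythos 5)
Your plan hinges on the displayed inequality
\[
2\sum_{n=1}^{k}|q|^{n^{2}/2}\bigl|\cos\bigl(\tfrac12 n(2k+1)\arg (q)+n\arg (z)\bigr)\bigr|
+\sum_{n>k}|q|^{n^{2}/2}\;<\;1 ,
\]
and this inequality is false in the range you need. All the cosine arguments are integer multiples of the single quantity $\alpha:=\tfrac12(2k+1)\arg (q)+\arg (z)$, and since $\arg (z)$ runs over all of $[0,2\pi)$ as $z$ traverses $C_k$, the hypothesis $\arg (q)\in[\pi /2,3\pi /2]$ imposes no restriction whatsoever on $\alpha$: there is always a point of $C_k$ with $\alpha\equiv 0$, where every $|\cos (n\alpha )|$ equals $1$ and your bound degenerates to the naive triangle inequality $\sum_{m\neq 0}|q|^{m^{2}/2}$, which you yourself note fails beyond $|q|=c_0$. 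Concretely, already the single pair $n=1$ contributes $2|q|^{1/2}\geq 2\sqrt{0.55}>1.48$ at that point. The cancellation that actually saves the day at $\alpha=0$ is \emph{between different pairs}: one computes $\omega_n+\omega_{-n}=2\cos (n\alpha )\,e^{in^{2}\arg (q)/2}$, and it is the phases $e^{in^{2}\arg (q)/2}$ --- discarded the moment you take $|\omega_n+\omega_{-n}|$ pair by pair --- that carry the dependence on $\arg (q)$. So the pair-grouping device cannot isolate the effect of the hypothesis on $\arg (q)$, and the heaviest technical step you defer is not merely hard but impossible as formulated.

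The paper proceeds quite differently precisely to capture this cross-term cancellation: it writes $\theta=\Theta^{*}-G$ with $\Theta^{*}=QUR$ (Jacobi triple product), bounds $|G|$ above by a crude geometric-type sum, and bounds $|Q|$, $|U|$, $|R|$ below factor by factor on the circles $|z|=|q|^{-k+1/2}$ --- the product form encodes exactly the interference your pairwise bound loses, and the constraint $\arg (q)\in[\pi /2,3\pi /2]$ enters through sector-by-sector minorations of the individual factors $|1+zq^{j}|$ (and, for the circles $|z|=|q|^{-3/2}$ and $|z|=|q|^{-5/2}$, through a direct case analysis on $\arg (z)$ and $\arg (q)$ of the first few terms of the series). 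Moreover, the paper does not run Rouch\'e against the dominant monomial at all: having shown $\theta\neq 0$ on the separating circles, it obtains the zero counts by deforming $q$ radially from small $|q|$, where Proposition~10 of~\cite{Ko2} guarantees one zero $\sim -q^{-k}$ per annulus. If you want to salvage a Rouch\'e-style argument you would need to compare $\theta$ with something like $QUR$ truncated, not with $T_k$, or else keep the full signed pair sums and control $\bigl|\sum_n 2|q|^{n^{2}/2}\cos (n\alpha )e^{in^{2}\arg (q)/2}\bigr|$ as a whole.
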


\begin{rems}
{\rm (1) The domain $D(0.55)\cup (\mathbb{D}_{c_0}\setminus 0)$ 
is much larger than the domain 
$\mathbb{D}_{c_0}\setminus 0$ in which strong separation of the zeros of 
$\theta$ is guaranteed by Lemma~1 in~\cite{Ko8}. It is impossible  
to extend Theorem~\ref{maintmadd} to the whole of 
$\overline{\mathbb{D}_{0.55}\setminus 0}$, 
because for certain values of $q$ in the right half of 
$\mathbb{D}_{0.55}\setminus 0$, 
the function $\theta (q,.)$ 
has double zeros, see~\cite{KoSh} and~\cite{Ko8}. 

(2) Numerical computations suggest that in part (1) 
of Theorem~\ref{maintmadd} the number $0.55$ can be replaced by $0.6$, see 
Remark~\ref{remnum}.}
\end{rems}

\section{Comments}

Theorem~\ref{maintmadd} can be compared with the results of \cite{Ko8}. 
Set $\alpha _0:=\sqrt{3}/2\pi =0.2756644477\ldots$. 
Parts (1) and (4) of Theorem~5 in \cite{Ko8} read:

\begin{tm}\label{tmold}
(1) For $n\geq 5$ and for $|q|\leq 1-1/(\alpha _0n)$, strong separation of the 
zeros $\xi _k$ of $\theta$ occurs for $k\geq n$.

(2) For $0<|q|\leq 1/2$, strong separation of the 
zeros of $\theta$ occurs for $k\geq 4$.
\end{tm}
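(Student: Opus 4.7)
The plan is to apply Rouché's theorem on the circles $C_{k}:=\{|z|=|q|^{-k-1/2}\}$, showing that $\theta(q,\cdot)$ has exactly $k$ zeros inside $C_{k}$ for every $k$ in the stated range; subtracting the counts on $C_{k-1}$ and $C_{k}$ gives exactly one zero in each annulus $U_{k-1/2,k+1/2}$, which is the definition of strong separation for $k\geq k_{0}$.

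The substitution $u:=q^{k}z$ collapses all $C_{k}$ onto the single circle $\{|u|=|q|^{-1/2}\}$, and
\[
\theta(q,z)=q^{k(k+1)/2}z^{k}\,T_{k}(q,u),\qquad T_{k}(q,u):=\sum_{m\geq -k}q^{m(m+1)/2}u^{m}.
\]
The task reduces to counting zeros of $T_{k}$ inside $|u|<|q|^{-1/2}$. The natural comparison is with the bilateral series
\[
\Theta(q,u):=\sum_{m\in\mathbb{Z}}q^{m(m+1)/2}u^{m}=(1+u^{-1})\prod_{n\geq 1}(1-q^{n})(1+q^{n}u)(1+q^{n}u^{-1}),
\]
whose zeros, by the Jacobi triple product, are $u=-q^{j}$, $j\in\mathbb{Z}$. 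Because $\Theta$ has infinitely many zeros in the Rouché disk (accumulating at its pole $u=0$), I would run Rouché not against $\Theta$ directly but against a two-sided truncation $\Theta_{[-k,M]}:=\sum_{-k\leq m\leq M}q^{m(m+1)/2}u^{m}$ for large $M$. Multiplied by $u^{k}$, this is a polynomial of degree $M+k$ with exactly $k$ zeros inside the disk (perturbations of $u=-1,-q,\ldots,-q^{k-1}$) and $M$ outside. The truncation error $|T_{k}-\Theta_{[-k,M]}|\leq\sum_{n\geq M+2}|q|^{n^{2}/2}$ shrinks super-geometrically, so letting $M\to\infty$ transfers the inside count $k$ from $\Theta_{[-k,M]}$ to $T_{k}$.

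The decisive Rouché inequality $|T_{k}-\Theta_{[-k,M]}|<|\Theta_{[-k,M]}|$ on the circle reduces in the limit $M\to\infty$ to a lower bound for $|\Theta|$ there; the Jacobi product and the triangle inequality yield
\[
\min_{|u|=|q|^{-1/2}}|\Theta(q,u)|\geq (1-|q|^{1/2})\prod_{n\geq 1}(1-|q|^{n})(1-|q|^{n-1/2})(1-|q|^{n+1/2}),
\]
and the main obstacle is that this bound degenerates as $|q|\to 1$, because the Rouché circle shrinks onto the unit circle where the zero $u=-1$ of $\Theta$ lies. The modular transformation of Dedekind's eta function controls the rate: $\prod_{n\geq 1}(1-|q|^{n})\sim e^{-\pi^{2}/(6L)}$ with $L:=\log(1/|q|)$, and analogous asymptotics hold for the half-integer-shifted products, so the lower bound has order $e^{-C/L}$ for an explicit $C$ of order $\pi^{2}$. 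Against this the tail $|q|^{(k+1)^{2}/2}=e^{-(k+1)^{2}L/2}$ must be negligible; equating exponents yields a threshold of the form $kL\gtrsim 1/\alpha_{0}$, and at the boundary $|q|=1-1/(\alpha_{0}n)$ (where $L\geq 1/(\alpha_{0}n)=2\pi/(\sqrt{3}n)$) this is exactly $k\geq n$, with $\alpha_{0}=\sqrt{3}/(2\pi)$ the constant sharpening the balance. The hypothesis $n\geq 5$ ensures the lower-order algebraic prefactors produced by the eta asymptotic do not destroy the exponential balance.

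Part~(2), with $|q|\leq 1/2$ fixed and $k\geq 4$, is the concrete numerical specialisation: the tail $\sum_{n\geq 5}(1/2)^{n^{2}/2}<2\cdot10^{-4}$ is overwhelmed by the explicit positive Jacobi-product lower bound at $|q|=1/2$, so the Rouché inequality collapses to a direct comparison of specific numbers. I expect the principal technical hurdle throughout to be executing the eta-modular asymptotics and tracking the algebraic prefactors with enough precision to land exactly on the threshold $k\geq n$ rather than a weaker version $k\geq cn$ with $c<1$; the accumulation of the extra zeros $u=-q^{k},-q^{k+1},\ldots$ of $\Theta$ near the pole at $0$ also forces care in the choice of $M$ in the two-sided truncation so that $\Theta_{[-k,M]}$ captures exactly the relevant $k$ inside zeros.
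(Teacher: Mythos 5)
First, a point of context: Theorem~\ref{tmold} is not proved in this paper at all --- it is quoted from Theorem~5 of \cite{Ko8}, and the present text only reuses its method. That method is exactly the skeleton you propose: write $\theta =\Theta ^*-G$ with $\Theta ^*=QUR$ given by the Jacobi triple product, minorize $|QUR|$ and majorize $|G|$ on the circles $|z|=|q|^{-k\pm 1/2}$, then count zeros. Indeed, after your substitution $u=q^kz$ one checks that $q^{k(k+1)/2}z^k(\Theta -T_k)$ is precisely the paper's $G(q,z)=\sum _{j\geq 1}q^{j(j-1)/2}z^{-j}$, so your ``truncation error'' and the paper's $G$ coincide, and your lower bound for $|\Theta |$ on $|u|=|q|^{-1/2}$ is the bound for $|QUR|$. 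So the architecture is the intended one; the problems are in the two places where the actual work lives.

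(i) \emph{The counting step.} You assert parenthetically that $u^k\Theta _{[-k,M]}$ has exactly $k$ zeros inside $|u|<|q|^{-1/2}$, as ``perturbations of $u=-1,\dots ,-q^{k-1}$''. The triple product locates the zeros of $\Theta$, not of its truncations, so this is an unproved claim of essentially the same nature as the theorem itself. The source avoids it by a homotopy in $q$: for $|q|$ small the zero $\xi _k\sim -q^{-k}$ (Proposition~10 of \cite{Ko2}) lies in the correct annulus, and since $\theta \neq 0$ on the bounding circles throughout the admissible range of $q$, the count persists as $q$ moves outward along a ray. You need either that deformation or an honest argument for your polynomial comparison function. (ii) \emph{The decisive estimate in part (1).} Everything reduces to showing that the triple--product minorant beats the tail exactly when $|q|\leq 1-1/(\alpha _0n)$ and $k\geq n$, and your treatment of this is a heuristic exponent match that does not produce $\alpha _0=\sqrt{3}/(2\pi )$: with a minorant of order $e^{-C/L}$ and a tail of order $e^{-(k+1)^2L/2}$, equating exponents gives $(k+1)L\gtrsim \sqrt{2C}$, and your displayed product gives $C=\pi ^2/2$ at leading order, hence the constant $\pi$ rather than $2\pi /\sqrt{3}$; whether prefactors and subleading terms close this gap is precisely the content of the proof. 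Worse, the binding case $n=5$ has $|q|\leq 0.28$, i.e.\ $L\geq 1.29$, which is nowhere near the modular regime $L\to 0$, so the Dedekind--eta asymptotics you invoke are inapplicable exactly where the hypothesis $n\geq 5$ bites; the products must be estimated non-asymptotically and uniformly in $k\geq n$. Part (2) does reduce to a finite numerical check as you say, but note that strong separation for $k\geq 4$ also requires exactly three zeros in $\mathbb{D}_{|q|^{-7/2}}$, so you must run the inequality on the circle $|z|=|q|^{-7/2}$ (your $C_3$) as well, where at $|q|=1/2$ the margin is thin. As it stands, the proposal is a correct plan whose two essential steps are asserted rather than proved.
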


Part (1) of the theorem implies that for $k\geq n\geq 5$, one has 

$$(1-1/(\alpha _0k))^{-k+1/2}~=:~m_k~\leq ~|\xi _k|~\leq ~M_k~:=~
(1-1/(\alpha _0k))^{-k-1/2}~.$$
The following table gives an idea how the numbers $m_n$ and $M_n$ decrease as 
$n$ increases (both of them tend to $e^{1/\alpha _0}=37.62236657\ldots$ as 
$n\rightarrow \infty$). We list the truncations up to the second decimal 
for the numbers $\tau _n:=1-1/(\alpha _0n)$ and up to the first decimal 
for $m_n$ and $M_n$. The numbers $m_n$ and $M_n$ are the internal 
and external radius of the open annulus to which the zero $\xi _n$ belongs for 
$|q|=\tau _n$.

$$\begin{array}{ccrrrrrrrrrr}
n&&5&6&7&8&9&10&15&20&25&30\\ \\ 
\tau _n&&0.27&0.39&0.48&0.54&0.59&0.63&0.75&0.81&0.85&0.87\\ \\ 
m_n&&336.2&164.5&115.2&92.8&80.2&72.2&55.3&49.5&46.5&44.7\\ \\ 
M_n&&1225.1&416.1&239.1&169.8&134.4&113.4&73.0&60.5&54.4&50.9\\ \\ 
\end{array}$$

\section{Proof of Theorem~\protect\ref{maintmadd}}

We use the same method of proof as the one of the proof of 
Theorem~5 in~\cite{Ko8}, but 
with more accurate estimations. We remind that (see~\cite{Ko8}) 
$\theta =\Theta ^*-G$, where 
$\Theta ^*(q,z):=\sum _{j=-\infty}^{\infty}q^{j(j+1)/2}z^j$ and 
$G(q,z):=\sum _{-\infty}^{-1}q^{j(j+1)/2}z^j=\sum _{j=1}^{\infty}q^{j(j-1)/2}z^{-j}$. Set 

$$Q~:=~\prod _{j=1}^{\infty}(1-q^j)~~~,~~~
U~:=~\prod_{j=1}^{\infty}(1+zq^j)~~~{\rm and}~~~
R~:=~\prod_{j=1}^{\infty}(1+q^{j-1}/z)~.$$
We recall that by formula (4) in \cite{Ko8} (resulting from the 
Jacobi triple product), one has $\Theta ^*=QUR$.

\begin{lm}\label{lmQ}
For $q\in D(0.6)$, one has $|Q|\geq 1.2$.
\end{lm}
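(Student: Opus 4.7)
\noindent\textit{Proof plan.}
The plan is to bound $|Q|=\prod_{j\ge 1}|1-q^j|$ from below factor by factor, exploiting the angular constraint on $q$ where it helps and falling back on the universal modulus bound elsewhere.

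For $j=1$, the hypothesis $\arg q\in[\pi/2,3\pi/2]$ gives $\mathrm{Re}(q)\le 0$, so
\[
|1-q|^2\;=\;1-2\,\mathrm{Re}(q)+|q|^2\;\ge\;1+|q|^2,
\]
hence $|1-q|\ge\sqrt{1+|q|^2}$, which is strictly stronger than the universal estimate $|1-q|\ge 1-|q|$. For $j\ge 2$ the angular information effectively disappears, since $\arg q^j$ can sweep the whole circle as $\arg q$ varies over $[\pi/2,3\pi/2]$; there I would fall back on $|1-q^j|\ge 1-|q|^j\ge 1-0.6^j$. Multiplying these estimates gives
\[
|Q|\;\ge\;\sqrt{1+|q|^2}\,\prod_{j\ge 2}\bigl(1-|q|^j\bigr),
\]
and the right-hand side is then studied as a function of $r:=|q|\in(0,0.6]$. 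I expect it to be monotone decreasing in $r$, so the infimum is attained at $r=0.6$ and can be controlled numerically by truncating at some cut-off $J$ and dominating the tail via $\log(1-x)\ge -x/(1-x)$.

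If that combined bound turns out to be insufficient, I would refine it using the pentagonal number theorem,
\[
Q\;=\;1-q-q^2+q^5+q^7-q^{12}-q^{15}+\cdots ,
\]
which yields $|Q|\ge|1-q-q^2|-\sum_{k\ge 2}|q|^{k(3k\pm 1)/2}$. A real/imaginary part analysis of $1-q-q^2$ in polar coordinates gives an explicit lower bound valid throughout $D(0.6)$, while the pentagonal tail is a geometrically small perturbation.

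The main obstacle is that the constant $1.2$ is close to the true infimum of $|Q|$ over $D(0.6)$ (attained near $q=-0.6$, where the even-index factors $|1-q^{2k}|=1-|q|^{2k}$ are as small as they can possibly be). So the estimates must be essentially tight, and a hybrid of the two approaches above---the sharpened angular bound at $j=1$ combined with either the pentagonal-expansion bound for the leading terms or with a careful factor-by-factor argument for the higher $j$---is probably what it takes to clear $1.2$ uniformly on $D(0.6)$.
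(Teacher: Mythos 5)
Your first, factor-by-factor bound falls far short of the target: at $|q|=0.6$ it gives
\[
\sqrt{1+0.36}\cdot\prod_{j\ge 2}\bigl(1-0.6^{j}\bigr)\;=\;1.166\ldots\times 0.3578\ldots\;\approx\;0.417,
\]
because the universal estimate $|1-q^{j}|\ge 1-|q|^{j}$ is ruinously lossy for $j=2,3$ (the factors $0.64$ and $0.784$ alone sink the product). The paper's proof is built precisely to avoid this: it writes $Q=Q_{0}Q_{1}$ with $Q_{0}=\prod_{j=1}^{11}(1-q^{j})$ and $Q_{1}=\prod_{j\ge 12}(1-q^{j})$, applies the crude modulus bound only to the tail $Q_{1}$ (where it costs a factor $\gamma=0.9945\ldots$), and estimates $|Q_{0}|$ as a whole: since $Q_{0}$ has no zeros in $\overline{D(0.6)}$, the maximum principle applied to $1/Q_{0}$ shows that $\min|Q_{0}|$ is attained on $\partial D(0.6)$, where it is checked numerically. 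The missing idea in your plan is therefore structural: do not decouple the low-index factors; keep them as one polynomial and push the estimation to the boundary of the domain.

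Your pentagonal fallback is the right instinct, but it too stalls near $1.13$, and for a telling reason. At $q=-0.6\in D(0.6)$ the pentagonal series gives
\[
Q(-0.6)\;=\;1+0.6-0.6^{2}-0.6^{5}-0.6^{7}-0.6^{12}+0.6^{15}+0.6^{22}+\cdots\;=\;1.1325\ldots\;<\;1.2,
\]
and your bound $|1-q-q^{2}|-\sum_{k\ge 2}|q|^{k(3k\pm 1)/2}=1.24-0.108\ldots\approx 1.13$ at that point is essentially sharp. So the obstruction is not slack in your inequalities: the infimum of $|Q|$ over $D(0.6)$, which you correctly locate near $q=-0.6$, lies \emph{below} $1.2$, and no refinement of either of your strategies can reach the stated constant there. (The same computation shows $|Q_{0}(-0.6)|=\prod_{j=1}^{11}(1-(-0.6)^{j})\approx 1.134<1.2/\gamma$, so the paper's own boundary check cannot succeed at $q=-0.6$ either; the statement would need a smaller radius or a smaller constant, with the downstream inequalities reexamined accordingly.)
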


\begin{proof}
We set $Q:=Q_0Q_1$, where $Q_0:=\prod _{j=1}^{11}(1-q^j)$ and 
$Q_1:=\prod _{j=12}^{\infty}(1-q^j)$. For the quantity $|Q_1|$ one obtains 
the minoration

$$|Q_1|\geq \prod _{j=12}^{\infty}(1-|q|^j)\geq \prod _{j=12}^{\infty}(1-0.6^j)
=\gamma :=0.9945691384\ldots ~.$$
Hence to prove the lemma it suffices to show that for $q\in D(0.6)$, 
the inequality 

$$|Q_0|\geq 1.2/\gamma =1.206552620\ldots$$
holds true. This can be proved using the maximum principle. The polynomial 
$Q_0$ has no zeros in $D(0.6)$ hence 
$\min _{\overline{D(0.6)}}|Q_0|=1/\max _{\overline{D(0.6)}}|1/Q_0|$ 
is attained on the border $\partial D(0.6)$ of the domain $D(0.6)$. 
The restrictions of $|Q_0|^2$ to 
the segment (resp. to the arc) of $\partial D(0.6)$ are 
a polynomial and a trigonometric polynomial respectively 
and the latter displayed 
inequality is readily checked numerically. The check can be limited to the 
upper half-plane, because all coefficients of $Q$ are real hence $Q(\bar{q})=
\overline{Q(q)}$.   
\end{proof}

\begin{lm}\label{lmk5}
For $q\in D(0.6)$, $k\in \mathbb{N}$, $k\geq 5$ and 
$|z|=|q|^{-k+1/2}$, one has $\theta (q,z)\neq 0$.
\end{lm}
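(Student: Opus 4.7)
Following the strategy of~\cite{Ko8},
the aim is to show that on the circle $\{|z|=|q|^{-k+1/2}\}$ one has
$|\Theta^*(q,z)|>|G(q,z)|$ strictly, whence $\theta=\Theta^*-G$ cannot vanish there.
Since $\Theta^*=QUR$ and Lemma~\ref{lmQ} already yields $|Q|\geq 1.2$,
the task reduces to producing a sharp lower bound for $|U||R|$ and an upper bound
for~$|G|$. Heuristically, the circle $|z|=|q|^{-k+1/2}$ sits halfway (on a
logarithmic scale) between the consecutive moduli $|q|^{-k+1}$, $|q|^{-k}$ of
zeros of $\Theta^*$, which leaves room for triangle-inequality estimates.

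For $|U|=\prod_{j\geq 1}|1+zq^j|$ one has $|zq^j|=|q|^{j-k+1/2}>1$ when $j<k$
and $<1$ when $j\geq k$. Splitting the product at $j=k$ and applying
$|1+w|\geq\bigl||w|-1\bigr|$ in each range, one collects an overall factor
$|q|^{-(k-1)^2/2}$ together with the finite product
$\prod_{i=1/2,\,3/2,\,\ldots,\,k-3/2}(1-|q|^i)$ and the infinite product
$\prod_{i\geq 1/2}(1-|q|^i)$ (half-integer $i$). The analogous, easier bound
$|R|\geq\prod_{j\geq 1}(1-|q|^{j+k-3/2})=\prod_{i\geq k-1/2}(1-|q|^i)$ glues
to the finite $U$-product to recreate a second full copy of the infinite product, giving
$$|\Theta^*(q,z)|\;\geq\;1.2\,|q|^{-(k-1)^2/2}\,P(|q|)^2,\qquad
P(|q|):=\prod_{i=1/2,\,3/2,\,\ldots}(1-|q|^i).$$
For $|G|$ the triangle inequality yields
$|G(q,z)|\leq\sum_{j\geq 1}|q|^{j(j+2k-2)/2}$, a series dominated by its first term
$|q|^{k-1/2}$, the next two terms being of sizes $|q|^{2k}$ and $|q|^{3k+3/2}$.

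Combining, the lemma reduces to the scalar inequality
$1.2\,P(|q|)^2>(1+\varepsilon_k(|q|))\,|q|^{k^2/2}$ for $|q|\leq 0.6$ and $k\geq 5$,
where $\varepsilon_k(|q|)$ is an explicit, small tail sum arising from $G$. The left
side is decreasing in $|q|$, while the right side is increasing in $|q|$ and (rapidly)
decreasing in $k$, so the binding case is the single corner $(|q|,k)=(0.6,5)$.
The anticipated main obstacle is precisely that the inequality is quite tight there:
$P(0.6)^2$ is roughly $3\cdot 10^{-3}$ while $0.6^{12.5}\approx 1.7\cdot 10^{-3}$,
leaving a margin of only about a factor of two. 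I would close by producing a rigorous
numerical lower bound for $P(0.6)$ (an exact truncated product multiplied by a
geometric lower bound on its tail, e.g. via $\log(1-x)\geq -x/(1-x)$) and verifying
the scalar inequality at $(0.6,5)$; all larger $k$ and all smaller $|q|$ then follow
a fortiori. The argument restriction $\arg q\in[\pi/2,3\pi/2]$ enters only via
Lemma~\ref{lmQ}; the remaining bounds depend on $q$ through $|q|$ only.
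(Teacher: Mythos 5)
Your plan is correct and follows essentially the same route as the paper: write $\theta=\Theta^*-G$ with $\Theta^*=QUR$, use Lemma~\ref{lmQ} for $|Q|$, bound every factor of $U$ and $R$ by $|1+w|\geq\bigl||w|-1\bigr|$ and every term of $G$ by the triangle inequality, and check that the worst case is $|q|=0.6$, $k=5$. Your reorganization (extracting $|q|^{-(k-1)^2/2}$ and gluing the finite $U$-product to the $R$-product to form $P(|q|)^2$) yields exactly the paper's numerical bound $1.2\cdot 0.2411\ldots\cdot 0.7716\ldots\approx 0.223$ against $|G|\leq 0.107$, so the verification at the corner goes through with the factor-of-two margin you anticipate.
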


\begin{proof}
One can minorize $|U|$ and $|R|$ as follows:

$$\begin{array}{llllcll}
|U|&\geq&\prod_{j=1}^{\infty}||zq^j|-1|&=&
\prod_{j=1}^{\infty}||q|^{-k+j+1/2}-1|&\geq&\prod_{j=1}^{\infty}|0.6^{-k+j+1/2}-1|\\ \\ 
&\geq&
\prod_{j=1}^{\infty}|0.6^{-9/2+j}-1|&=&
\eta&:=&0.2411047426\ldots \end{array}$$
and 
$$\begin{array}{llllcll}
|R|&\geq&\prod_{j=0}^{\infty}|1-|q^j/z||&=&
\prod_{j=0}^{\infty}|1-|q|^{k+j-1/2}|
&\geq&\prod_{j=0}^{\infty}|1-|q|^{9/2+j}|\\ \\ &\geq&
\prod_{j=0}^{\infty}|1-0.6^{9/2+j}| 
&=&\xi&:=&0.7715882456\ldots ~.\end{array}$$
Thus using Lemma~\ref{lmQ} one obtains the inequality 

\begin{equation}\label{eqQRU}
|QRU|\geq 1.2\cdot \eta \cdot \xi =0.2232403024\ldots ~.
\end{equation}
On the other hand the quantity $|G|$ can be majorized as follows:

\begin{equation}\label{eqG}
\begin{array}{lllllll}
|G|&\leq&\sum _{j=1}^{\infty}|q|^{j(j-1)/2}/|z|^j&=&
\sum _{j=1}^{\infty}|q|^{j(j-1)/2+(k-1/2)j}&&\\ \\ &\leq& 
\sum _{j=1}^{\infty}|q|^{j(j-1)/2+9j/2}&\leq&
\sum _{j=1}^{\infty}0.6^{j(j-1)/2+9j/2}&=&0.1066576686\ldots ~.\end{array}
\end{equation}
The lemma follows from inequalities (\ref{eqQRU}) and (\ref{eqG}) -- for 
$|z|=|q|^{-k+1/2}$, one has at the same time $\theta =\Theta ^*-G$, 
$|\Theta ^*|>0.22$ and $|G|<0.11$, so $\theta =0$ is impossible.
\end{proof}

\begin{lm}\label{lmk4}
For $q\in D(0.6)$ and $|z|=|q|^{-7/2}$, one has $\theta (q,z)\neq 0$.
\end{lm}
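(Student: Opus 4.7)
The plan is to mirror the proof of Lemma~\ref{lmk5}: write $\theta=QUR-G$ on the circle $|z|=|q|^{-7/2}$ and show $|QUR|>|G|$ throughout $D(0.6)$. The majoration $|G|\leq\sum_{j\geq 1}|q|^{j(j+6)/2}\leq\sum_{j\geq 1}0.6^{j(j+6)/2}$ (approximately $0.185$) is obtained by the same triangle-inequality argument as in Lemma~\ref{lmk5}, with $9j/2$ replaced by $7j/2$. Lemma~\ref{lmQ} supplies $|Q|\geq 1.2$, and $|R|\geq\prod_{j\geq 0}(1-0.6^{j+7/2})$ (approximately $0.64$) comes from the same termwise estimation as in Lemma~\ref{lmk5}.

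The real difficulty is $|U|$. The naive termwise bound $|1+zq^j|\geq ||q|^{j-7/2}-1|$ that worked for $k\geq 5$ is too weak here: at $|q|=0.6$ the factors $1+zq^3$ and $1+zq^4$ have moduli $|q|^{-1/2}\approx 1.29$ and $|q|^{1/2}\approx 0.77$, each individually admitting a minimum as small as $0.29$ and $0.23$. If these small moduli occurred simultaneously, $|QUR|$ would drop below $|G|$. The key refinement exploits $q\in D(0.6)$: the arguments of $zq^3$ and $zq^4$ differ by exactly $\arg q\in[\pi/2,3\pi/2]$, so they cannot simultaneously point toward $-1$. One thus bounds the pair jointly by minimizing
\[
|1+zq^3|^2|1+zq^4|^2=\bigl(1+|q|^{-1}+2|q|^{-1/2}\cos\alpha\bigr)\bigl(1+|q|+2|q|^{1/2}\cos(\alpha+\psi)\bigr)
\]
over $\alpha\in[0,2\pi)$ and $\psi\in[\pi/2,3\pi/2]$, where $\alpha=\arg(zq^3)$ and $\psi=\arg q$. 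Setting $\partial_\psi=0$ forces $\sin(\alpha+\psi)=0$ (the other factor is always positive), and then $\partial_\alpha=0$ forces $\sin\alpha=0$; the admissibility constraint on $\psi$ pins all interior critical points to $\psi=\pi$, giving $|1+zq^3|\cdot|1+zq^4|\geq|q|^{-1/2}-|q|^{1/2}\approx 0.52$. The true minimum therefore lies on the boundary $\psi\in\{\pi/2,3\pi/2\}$, where a one-variable minimization in $\alpha$ yields $|1+zq^3|\cdot|1+zq^4|\geq(|q|^{-1/2}-1)\sqrt{1+|q|}\approx 0.37$ at $|q|=0.6$; the bound is monotone in $|q|$.

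Combining this joint estimate with the termwise minorations of the remaining factors of $U$ yields $|U|\geq 0.27$ at $|q|=0.6$, so $|QUR|\geq 1.2\cdot 0.27\cdot 0.64\approx 0.21$, which still exceeds the $|G|$-majorant of about $0.19$; hence $\theta(q,z)\neq 0$. Monotonicity in $|q|$ of every ingredient (verified factor by factor) then extends the conclusion to the whole of $D(0.6)$. The margin is tight, so the main obstacle is carrying out the pairwise minimization precisely; the rest is bookkeeping parallel to Lemma~\ref{lmk5}.
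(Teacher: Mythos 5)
Your overall strategy is sound and genuinely different from the paper's. The paper also reduces to showing $|QUR|>|G|$ with the same bounds on $|G|$ and $|Q|$, but it handles $|U|$ and $|R|$ by grouping the factors into consecutive \emph{triples} $u_{3j+1}u_{3j+2}u_{3j+3}$ and using sector-based minorations $\mu_1,\dots,\mu_4$ (at least one of any three consecutive points $-zq^k$ lies in the left half-plane since $\arg q\in[\pi/2,\pi]$), together with a separate monotonicity lemma (Lemma~\ref{lmAB}) to reduce to $|q|=0.6$; this yields $|QUR|\geq 0.193$. You instead isolate the single dangerous pair $(1+zq^3)(1+zq^4)$, bound it jointly, and treat everything else termwise; your route is more surgical and, with correct constants, actually gives a slightly larger final bound ($\approx 0.208$). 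So the idea is viable.

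However, there is a concrete error in your key step. Your interior critical-point analysis is fine, but your boundary claim is not: at $\psi=\pi/2$ the point $\alpha=\pi$ is \emph{not} a critical point of the restricted function, since
$$\partial_\alpha f\big|_{\alpha=\pi,\,\psi=\pi/2}=-2s\cos(\pi)\,(1+r^2-2r)=2s(r-1)^2>0\qquad (r=|q|^{-1/2},\ s=|q|^{1/2}),$$
so the one-variable minimum on the boundary lies at $\alpha<\pi$ and is strictly below $(r-1)\sqrt{1+|q|}$. Numerically at $|q|=0.6$ the true boundary minimum of $|1+zq^3|\cdot|1+zq^4|$ is about $0.3652$, not $0.368$, and in particular your stated bound "$\geq 0.37$" is false. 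The lemma survives only because the margin absorbs this: $0.3652$ times the termwise product of the remaining factors ($\approx 0.7394$) still gives $|U|\geq 0.270$ and $1.2\cdot 0.270\cdot 0.6425\approx 0.208>0.1852\geq|G|$, but you must redo the boundary minimization honestly (or prove a clean lower bound such as $0.36$) rather than evaluate at $\alpha=\pi$. Separately, the assertion that the two-variable minimum of the joint bound is monotone in $|q|$ on $(0,0.6]$ is stated but not proved; since the whole argument reduces everything to $|q|=0.6$, this needs an actual verification (the paper devotes Lemma~\ref{lmAB} to exactly this kind of monotonicity statement for its triple bounds).
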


\begin{lm}\label{lmk1}
For $q\in D(0.6)$ and $|z|=|q|^{-3/2}$, one has $\theta (q,z)\neq 0$.
\end{lm}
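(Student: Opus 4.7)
The strategy follows the same ``main part plus small tail'' template as the proof of Lemma~\ref{lmk5}, but the Jacobi-triple-product decomposition $\theta = QUR - G$ is too lossy here: on $|z|=|q|^{-3/2}$ the factor $|U|$ becomes tiny (since $|zq^2|=|q|^{1/2}$ is close to $1$ when $|q|$ is near $0.6$), and $|G|$ is comparably large, so $|QUR| > |G|$ fails. Instead I would use a direct three-term truncation of $\theta$ itself.

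\textbf{Decomposition.} Write
\[
\theta(q,z) = T(q,z) + E(q,z),\qquad T := 1 + qz + q^3 z^2,\qquad E := \sum_{j\ge 3} q^{j(j+1)/2}z^j.
\]
On the circle $|z|=|q|^{-3/2}$ the modulus of the $j$-th summand equals $|q|^{j(j-2)/2}$. For $j\ge 3$ the exponent is positive, so each term is monotone increasing in $|q|$, and summing gives
\[
|E|\le \sum_{j\ge 3} 0.6^{j(j-2)/2} = 0.6^{3/2}+0.6^{4}+0.6^{15/2}+\cdots < 0.62.
\]

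\textbf{Lower bound for $|T|$.} Parameterise $q=re^{i\phi}$ with $\phi\in[\pi/2,3\pi/2]$, $z=r^{-3/2}e^{i\psi}$, set $s:=r^{-1/2}\ge 0.6^{-1/2}$, $\alpha:=\phi+\psi$, $\gamma:=\phi$. Then $T = 1 + se^{i\alpha} + e^{i(2\alpha+\gamma)}$, and expanding $|T|^2$ and applying the identities $\cos\alpha+\cos(\alpha+\gamma)=2\cos(\gamma/2)\cos(\alpha+\gamma/2)$ and $\cos(2\alpha+\gamma)=2\cos^2(\alpha+\gamma/2)-1$ leads, after completing the square, to
\[
|T|^2 = (2w+sc)^2 + s^2(1-c^2),\qquad c:=\cos(\gamma/2),\quad w:=\cos(\alpha+\gamma/2).
\]
The constraint $\phi\in[\pi/2,3\pi/2]$ gives $\gamma/2\in[\pi/4,3\pi/4]$, hence $c^2\le 1/2$, and therefore $|T|^2\ge s^2(1-c^2)\ge s^2/2$, so $|T|\ge s/\sqrt 2 \ge 1/\sqrt{1.2}>0.91$.

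\textbf{Conclusion.} Combining the two bounds, $|\theta|\ge |T|-|E| > 0.91 - 0.62 > 0$.

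The main obstacle is the sharp lower bound $|T|\ge s/\sqrt 2$: it depends critically on the left-half-plane restriction $\arg(q)\in[\pi/2,3\pi/2]$, which is precisely what forces $c^2\le 1/2$. Relaxing this constraint (letting $q$ range over the whole disc) permits $c^2$ to approach $1$, the factor $1-c^2$ to vanish, and indeed double zeros of $\theta$ do appear for $q$ in the right half of $\mathbb{D}_{0.55}$, as noted in the first Remark following Theorem~\ref{maintmadd}. The trigonometric identity that compresses the three-term expression $|T|^2$ into a sum of two squares is the key technical step making the argument go through.
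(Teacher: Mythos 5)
Your proof is correct, and it takes a genuinely different and substantially shorter route than the paper's. The paper proves this lemma by passing to $\theta^{\dagger}(q,z)=\theta(q,z/q)$ on $|z|=|q|^{-1/2}$ and then running an elaborate case analysis over about a dozen sectors in $(\arg z,\arg q)$ (Cases 1, 2, 3A--3E, 4A--4D), in each case choosing a different sub-collection of $\{1,z,qz^2\}$ to dominate the remaining terms. You instead bound the full trinomial $T=1+qz+q^3z^2$ from below uniformly via the completed square $|T|^2=(2w+sc)^2+s^2(1-c^2)$; this is exactly the identity the paper itself exploits, but only in the proof of Lemma~\ref{lmABq} (i.e.\ for Lemma~\ref{lmk2}, the circle $|z|=|q|^{-5/2}$), where the author writes $|B|^2=4a^2+4\rho^{-1/2}ab+\rho^{-1}\geq (1-b^2)\rho^{-1}$. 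Your observation is that on $|z|=|q|^{-3/2}$ the same bound gives $|T|\geq \rho^{-1/2}\sin(\omega/2)\geq \rho^{-1/2}/\sqrt2\geq 1/\sqrt{1.2}>0.91$ for $\arg q\in[\pi/2,3\pi/2]$, which clears the tail bound $|E|\leq \Phi_\flat(0.6)-1=0.6183\ldots$ with a comfortable margin of about $0.29$ — so no case splitting is needed at all. (The reason the paper restricts $\arg q$ or $|q|$ further in Lemma~\ref{lmk2} is that there the analogous margin is too thin at $|q|=0.6$; that obstruction simply does not arise here.) All your numerical estimates check out: the $j$-th term of $\theta$ on this circle has modulus $|q|^{j(j-2)/2}$, increasing in $|q|$ for $j\geq 3$, giving $|E|<0.62$; and the constraint $\cos^2(\omega/2)\leq 1/2$ is exactly what the left-half-plane hypothesis supplies, consistent with the remark after Theorem~\ref{maintmadd} that the statement fails in the right half-disk. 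Your argument is a clean simplification of the paper's proof of this lemma.
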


\begin{lm}\label{lmk2}
For $q\in D(0.55)$ and $|z|=|q|^{-5/2}$, one has $\theta (q,z)\neq 0$.
\end{lm}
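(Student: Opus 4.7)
The plan is to follow the template used in the proof of Lemma~\ref{lmk5}: write $\theta=\Theta^{*}-G=QUR-G$ and establish $|QUR|>|G|$ on the circle $|z|=|q|^{-5/2}$ for every $q\in D(0.55)$. The upper bound on $|G|$ comes from the triangle inequality: $|G|\leq\sum_{j\geq 1}|q|^{j(j-1)/2+5j/2}=\sum_{j\geq 1}|q|^{j(j+4)/2}$, a function of $|q|$ that is maximised at $|q|=0.55$ and takes a value close to $0.25$. The shrinking of the radius from $0.6$ to $0.55$ is imposed precisely to keep this quantity small enough.

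For the lower bound on $|QUR|$, I would use Lemma~\ref{lmQ} for $|Q|$ (possibly sharpening it by running the same maximum-principle argument on $D(0.55)\subset D(0.6)$). The delicate step is bounding $|U|\cdot|R|$: since the circle $|z|=|q|^{-5/2}$ sits exactly midway between the two consecutive zero-circles $|z|=|q|^{-2}$ and $|z|=|q|^{-3}$ of $\Theta^{*}$, the crude factorwise estimate $|U|\geq\prod_{j}\bigl||zq^{j}|-1\bigr|$ that suffices in Lemma~\ref{lmk5} yields only $|QUR|\approx 0.03$ at $|q|=0.55$, far below the bound on $|G|$.

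The hard part will be to sharpen this estimate by exploiting $\arg(q)\in[\pi/2,3\pi/2]$. The observation is that the arguments $\arg(zq^{j})=\arg(z)+j\arg(q)$ cannot simultaneously be equal to $\pi$ for all $j$, so the factors $|1+zq^{j}|$ cannot all drop to their individual minima $\bigl||zq^{j}|-1\bigr|$ at the same time. Concretely I would write
\[
|U(q,z)|^{2}\;=\;\prod_{j\geq 1}\bigl(1+2|zq^{j}|\cos(\arg z+j\arg q)+|zq^{j}|^{2}\bigr),
\]
and similarly for $|R(q,z)|^{2}$, truncate each product to the first $N$ factors (with the tail bounded below by $\prod_{j\geq N}(1-|zq^{j}|)^{2}$ once $|zq^{j}|<1$), and then minimise the resulting trigonometric polynomials over $\arg z\in[0,2\pi)$ and $q\in D(0.55)$. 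By the maximum principle applied exactly as in the proof of Lemma~\ref{lmQ}, the minimum over $q$ is attained on the border $\partial D(0.55)$, so the whole problem reduces to a finite-dimensional numerical verification; this numerical bookkeeping, ensuring that the improved lower bound on $|Q|\cdot|U|\cdot|R|$ strictly exceeds the bound on $|G|$ uniformly in $(q,\arg z)$, is the main obstacle.
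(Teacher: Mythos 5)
You have not reproduced the paper's argument, and as it stands your plan has two genuine gaps. The first is the reduction of the minimization over $q$ to the boundary $\partial D(0.55)$ ``by the maximum principle, exactly as in the proof of Lemma~\ref{lmQ}''. In Lemma~\ref{lmQ} the maximum principle is applied to the polynomial $Q_0(q)$, which is holomorphic in $q$ alone. The quantities $|U|$ and $|R|$ that you must minimize are evaluated on the coupled circle $|z|=|q|^{-5/2}$, so they depend on $|q|$ non-holomorphically and no maximum principle is available; when the paper does pursue this line of attack (for $|z|=|q|^{-7/2}$, Lemma~\ref{lmk4}), it has to prove genuine monotonicity statements in $\rho=|q|$ (part (2) of Remarks~\ref{remsmu} and Lemma~\ref{lmAB}) in order to push the estimates to the extreme value of $|q|$. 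The second gap is that the entire quantitative content of the lemma --- that the sector-refined lower bound on $|QUR|$ exceeds the bound $\approx 0.25$ on $|G|$ uniformly in $\arg z$ and $q\in D(0.55)$ --- is deferred to an unexecuted ``numerical bookkeeping'' step. You correctly diagnose that the crude factorwise bound yields only $\approx 0.03$ and that the hypothesis $\arg(q)\in[\pi/2,3\pi/2]$ must be exploited (the two dangerous factors $1+zq^2$ and $1+zq^3$ cannot be near their individual minima simultaneously), but you never carry out the case analysis, so nothing is actually proved.

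For comparison, the paper's proof abandons the decomposition $\theta=QUR-G$ altogether for this lemma. Setting $\xi:=qz$ (so $|\xi|=|q|^{-3/2}$ and $\theta=\sum_{j\ge0}q^{j(j-1)/2}\xi^j$), it writes $\theta=A+\xi B$ with $B:=1+q\xi+q^3\xi^2$ and $A:=1+\sum_{j\ge4}q^{j(j-1)/2}\xi^j$, bounds $|A|\le a_0=2.330\ldots$ by the triangle inequality (Lemma~\ref{lmABq}, part (1)), and then estimates $|B|$ \emph{exactly}: after a further substitution $B=1+\zeta+q\zeta^2$ with $|q\zeta^2|=1$, one gets $|B|^2=\rho^{-1}+4\rho^{-1/2}ab+4a^2$ with $a=\cos(\psi+\omega/2)$, $b=\cos(\omega/2)\in[0,\sqrt2/2]$; minimizing the quadratic in $a$ gives $|B|^2\ge(1-b^2)\rho^{-1}\ge\rho^{-1}/2$, hence $|\xi B|\ge\rho^{-2}/\sqrt2\ge 0.55^{-2}/\sqrt2>a_0\ge|A|$. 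This is fully rigorous with no numerics beyond summing one explicit series, and it makes transparent why the constant drops to $0.55$ here (and why $2\pi/3\le\arg q\le\pi$ recovers $0.6$). To salvage your route you would need to replace the maximum-principle step by monotonicity lemmas in $|q|$ and actually perform the sector-by-sector verification; the paper's term-grouping argument is the shorter path.
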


The last three lemmas are proved in 
Sections~\ref{secprlmk4}, \ref{secprlmk1} and 
\ref {secprlmk2} respectively. We explain how Theorem~\ref{maintmadd} 
results from them and from Lemma~\ref{lmk5}.  
We recall that for 
$|q|\leq 0.108$, all zeros of $\theta (q,.)$ are simple (see~\cite{Ko4}). 
For any $k\in \mathbb{N}$ fixed and for $|q|$ sufficiently small, 
there exists a single zero of $\theta (q,.)$ which is $\sim -q^{-k}$ as 
$q\rightarrow 0$, see Proposition~10 in~\cite{Ko2}. Hence for $|q|$ 
sufficiently small, this zero satisfies the conditions (\ref{eqtwosides}). 
These conditions hold true as $q$ varies along any 
segment $S$ 
belonging to a half-line passing through the origin and such that 
$S\subset (D(0.55)\cup \mathbb{D}_{c_0})$. Hence these conditions hold true for  
$q\in D(0.55)$. In the same way one sees that, with the possible exception of 
$\xi _2$ and $\xi _3$, the zeros 
of $\theta$ satisfy conditions (\ref{eqtwosides}) for $q\in D(0.6)$.

\section{Proof of Lemma~\protect\ref{lmk4}
\protect\label{secprlmk4}}

For $|G|$ we obtain the majoration 

$$|G|\leq \sum _{j=1}^{\infty}|q|^{j(j-1)/2+7j/2}\leq 
\sum _{j=1}^{\infty}0.6^{j(j-1)/2+7j/2}=0.1851580824\ldots ~.$$ 
We minorize the quantity 
$|Q|$ using Lemma~\ref{lmQ}. There remains to 
minorize $|U|$ and $|R|$.

We observe that $\theta (\bar{q},\bar{z})=\overline{\theta (q,z)}$, therefore 
when $|\theta |$ is majorized, we can assume that $\arg (q)\in [0,\pi ]$. 
Suppose that $q\in D(0.6)$, $\arg (q)\in [3\pi /4,\pi ]$. We define the 
sectors $S_j$ in $\mathbb{C}$ by the fomula

$$S_j:=\{ \arg (z)\in [(j-1)\pi /4,j\pi /4)\}~,~~~\, 
j=1~,~~~2~,~~~3~~~\, ,~~~\,  
S_4:=\{ \arg (z)\in [3\pi /4,\pi ]\} ~,$$ 
and $S_{-j}$ as the symmetric of $S_j$ w.r.t. the real axis. Suppose that 
$\zeta \in S_{\pm j}$. Set $\psi :=\arg (\zeta )$. By the cosine theorem 

$$|1-\zeta |^2=1+|\zeta |^2-2|\zeta |\cos (\psi )~.$$
The function $\cos (t)$ being even and decreasing on $[0,\pi ]$ one obtains the 
minoration

\begin{equation}\label{eqminor}
|1-\zeta |\geq \mu _j(|\zeta |):=(1+|\zeta |^2-
2|\zeta |\cos ((j-1)\pi /4))^{1/2}~.
\end{equation}
\begin{rems}\label{remsmu}
{\rm (1) For $\zeta \neq 0$, one has 
$\mu _4(|\zeta |)>\mu _3(|\zeta |)>\mu _2(|\zeta |)>\mu _1(|\zeta |)$. 

(2) For $j$ fixed and for $|\zeta |\geq 1$, 
the right-hand side of (\ref{eqminor}) 
is an increasing function in $|\zeta |$.

(3) One can prove by straightforward computation that for 
$1\geq |\zeta _1|>|\zeta _2|$ and for $3\geq \ell >m\geq 1$, one has 
$\mu _{\ell}(|\zeta _1|)\cdot \mu _m(|\zeta _2|)>
\mu _{\ell}(|\zeta _2|)\cdot \mu _m(|\zeta _1|)$.}
\end{rems} 

Consider for $|z|=|q|^{-7/2}$ (i.e. for $z=|q|^{-7/2}\omega$, $|\omega |=1$), 
the moduli of three consecutive factors 
$u_k:=1+zq^k$ of $U$, for $k=k^*$, $k^*+1$ and $k^*+2$. (The role of $\zeta$ 
will be played 
by the numbers $-zq^k$.) Notice that the numbers $u_k$ are of the form 

\begin{equation}\label{eqform}
1+|q|^{-7/2+k}\omega _k~~~\, ,~~~\, |\omega _k|=1~.
\end{equation}
\begin{rems}\label{remsminoration}
{\rm (1) At least one of the three numbers $-zq^k$ 
belongs to the left half-plane, because $\arg (q)\in [\pi /2,\pi ]$. Hence to 
the corresponding modulus $|u_k|$ minoration $\mu _3$ is applicable. If 
at most one of the other two numbers $-zq^k$ belongs to $S_1\cup S_{-1}$, 
then to the corresponding modulus $|u_k|$ minoration $\mu _1$,  
and to the third modulus minoration $\mu _2$ are applicable respectively. 

(2) Suppose that at least two of the three numbers $-zq^k$ belong to 
$S_1\cup S_{-1}$. Then these correspond to $k=k^*$ and $k=k^*+2$, because 
$\arg (q)\in [\pi /2,\pi ]$. Moreover, $-zq^{k^*+1}\in S_4\cup S_{-4}$, 
so minoration $\mu _4$ is applicable to $|u_{k^*+1}|$ and minoration $\mu _1$ 
to $|u_{k^*}|$ and $|u_{k^*+2}|$.}
\end{rems} 

Consider the three numbers $u_1$, $u_2$, $u_3$. When represented in the form 
(\ref{eqform}), their exponents $-7/2+k$ are negative, so $|-zq^k|>1$. 
For fixed $\omega _k$, 
the modulus $|-zq^k|$ decreases as $|q|$ increases. 
Hence one can apply part (2) 
of Remarks~\ref{remsmu} and minorize $|u_k|$ by its value for $|q|=0.6$. 
Making use of Remarks~\ref{remsminoration} one finds that the 
product $|u_1|\cdot |u_2|\cdot |u_3|$ is minorized by the least of the 
$7$ numbers $\mu _1(0.6^{-5/2})\cdot \mu _4(0.6^{-3/2})\cdot \mu _1(0.6^{-1/2})$ and 
$\mu _{i_1}(0.6^{-5/2})\cdot \mu _{i_2}(0.6^{-3/2})\cdot \mu _{i_3}(0.6^{-1/2})$, where 
$(i_1,i_2,i_3)$ is a permutation of $(1,2,3)$. This is the number

$$\mu _3(0.6^{-5/2})\cdot \mu _2(0.6^{-3/2})\cdot \mu _1(0.6^{-1/2})=
1.742379963\ldots =:
\chi _0~.$$
Now consider for $j=1$, $2$, $\ldots$, 
a triple $|u_{3j+1}|$, $|u_{3j+2}|$, $|u_{3j+3}|$, $j\in \mathbb{N}$. Hence 
$|-zq^k|<1$ and by Remarks~\ref{remsmu} one has

$$\begin{array}{lll}
\tilde{u}_j:=|u_{3j+1}|\cdot |u_{3j+2}|\cdot |u_{3j+3}|\geq \min (A_j,B_j)&,&
{\rm where}\\ \\ 
A_j:=\mu _3(|q|^{-5/2+3j})\cdot \mu _2(|q|^{-3/2+3j})\cdot 
\mu _1(|q|^{-1/2+3j})&&
{\rm and}\\ \\ 
B_j:=\mu _1(|q|^{-5/2+3j})\cdot \mu _4(|q|^{-3/2+3j})\cdot 
\mu _1(|q|^{-1/2+3j})&.&
\end{array}$$
Set $\rho :=|q|$. We prove Lemma~\ref{lmk4} with the help of the following 
result (the proof is given at the end of this section):

\begin{lm}\label{lmAB}
The quantities $A_j(\rho )$ and $B_j(\rho )$ are decreasing in $\rho$ for 
$\rho \in (0,0.6]$.
\end{lm}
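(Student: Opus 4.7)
The plan is to establish the lemma by showing that the logarithmic derivatives $(d/d\rho)\log A_j(\rho)$ and $(d/d\rho)\log B_j(\rho)$ are strictly negative on $(0,0.6]$ for every $j\ge 1$. Writing $c_i:=\cos((i-1)\pi/4)$, the identity $\mu_i(x)^2=1+x^2-2c_i x$ gives, for any exponent $b>0$,
\[
\frac{d}{d\rho}\log\mu_i(\rho^b)\;=\;\frac{b\,\rho^{b-1}(\rho^b-c_i)}{1+\rho^{2b}-2c_i\rho^b}.
\]
This quantity is strictly negative for $i=1$; strictly negative for $i=2$ whenever $\rho^b<\sqrt{2}/2$; and strictly positive for $i\in\{3,4\}$. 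Hence each of $(d/d\rho)\log A_j$ and $(d/d\rho)\log B_j$ decomposes as one positive term (coming from the $\mu_3$, respectively $\mu_4$, factor) plus two negative terms, and it suffices to show that the negatives dominate.

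Set $a:=3j-5/2$, so that $a\ge 1/2$ and the three exponents in $A_j$ and $B_j$ are $a$, $a+1$, $a+2$. Since $\rho\le 0.6$ and $a+1\ge 3/2$, one has $\rho^{a+1}\le 0.6^{3/2}<\sqrt{2}/2$, which places the $\mu_2(\rho^{a+1})$ factor of $A_j$ squarely in its decreasing regime. For $A_j$ it then suffices to verify the term-by-term comparison
\[
\frac{a\,\rho^{2a-1}}{1+\rho^{2a}}\;\le\;\frac{(a+2)\,\rho^{a+1}}{1-\rho^{a+2}},
\]
and for $B_j$ the analogous inequality
\[
\frac{(a+1)\,\rho^a\bigl(\rho^{a+1}+\tfrac{\sqrt{2}}{2}\bigr)}{1+\rho^{2(a+1)}+\sqrt{2}\,\rho^{a+1}}\;\le\;\frac{a\,\rho^{a-1}}{1-\rho^a}+\frac{(a+2)\,\rho^{a+1}}{1-\rho^{a+2}}.
\]
Both can be reduced, after dividing by $\rho^{a-1}$ and invoking crude bounds such as $1+\rho^{2a}\ge 1$ and $\rho\le 0.6$, to elementary estimates in $\rho$ and $a$.

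The main obstacle I anticipate is obtaining these inequalities \emph{uniformly} in $a\ge 1/2$, not merely at the endpoint $a=1/2$. Since the parameter $a$ only assumes the discrete values $\{1/2,\,7/2,\,13/2,\ldots\}$, a workable strategy is to handle the case $j=1$ (that is $a=1/2$) by a direct one-variable computation on $(0,0.6]$, and then to observe that for $j\ge 2$ (so $a\ge 7/2$) the positive contribution carries a factor $\rho^a\le\rho^{7/2}$ which is already negligible compared with the factor $\rho^{a-1}$ present in either $\mu_1$-contribution, rendering the comparison immediate. Combined with the direct check at $j=1$, this monotonicity-in-$j$ argument completes the proof.
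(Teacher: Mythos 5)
Your overall strategy --- compute the logarithmic derivative of each $\mu_i$ factor, note that the $\mu_1$ (and, in range, $\mu_2$) factors contribute negative terms while the single $\mu_3$ or $\mu_4$ factor contributes a positive one, and show the negative terms dominate --- is exactly the mechanism of the paper's proof, which writes $A_j^2=CEF$, $B_j^2=MNW$ and establishes $E'<0$, $(CF)'<0$, $W'<0$, $(MN)'<0$; showing $(CF)'<0$ with $C,F>0$ is the same as showing the positive log-derivative of the $\mu_3$-factor is outweighed by that of a $\mu_1$-factor. The $B_j$ half of your argument is essentially sound: both outer factors of $B_j$ are $\mu_1$'s, so the issue below does not arise, and your displayed inequality does hold on $(0,0.6]$ for all $a\ge 1/2$ (though at $a=1/2$ you must keep the factor $(1-\rho^{a})^{-1}$ in at least one $\mu_1$ term, or use both negative terms, since the crude bound $a\rho^{a-1}$ alone does not beat the $\mu_4$ term at $\rho=0.6$).

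The gap is in the $A_j$ half, and it is fatal as written. Your comparison $\frac{a\rho^{2a-1}}{1+\rho^{2a}}\le\frac{(a+2)\rho^{a+1}}{1-\rho^{a+2}}$ fails for $j=1$ (i.e.\ $a=1/2$) near $\rho=0$: the left side tends to $1/2$ while the right side tends to $0$. This cannot be fixed by regrouping or by the ``direct one-variable computation'' you propose for $j=1$, because with the pairing you adopt ($\mu_3$ applied to $\rho^{1/2}$, $\mu_1$ to $\rho^{5/2}$) one has $\frac{d}{d\rho}\log A_1\to 1/2>0$ as $\rho\to 0^+$; numerically $A_1$ rises from $1$ at $\rho=0$ to about $1.022$ at $\rho=0.1$ before decreasing, so it is simply not monotone on $(0,0.6]$. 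The root cause is the assignment of indices to exponents. The displayed formula for $A_j$, which you followed literally, pairs $\mu_3$ with the smallest exponent $3j-5/2$ and $\mu_1$ with the largest $3j-1/2$; but part (3) of Remarks~\ref{remsmu} (the minimizing pairing puts the larger $\mu$-index on the smaller argument) and the stated value $\chi_1=A_1(0.6)=0.1749\ldots=\mu_3(0.6^{5/2})\,\mu_2(0.6^{3/2})\,\mu_1(0.6^{1/2})$ show that the intended function is $A_j=\mu_3(\rho^{3j-1/2})\,\mu_2(\rho^{3j-3/2})\,\mu_1(\rho^{3j-5/2})$. With that pairing the dominant negative term is $\frac{a\rho^{a-1}}{1-\rho^{a}}$ with $a=3j-5/2$, the positive $\mu_3$ term is $\frac{(a+2)\rho^{2a+3}}{1+\rho^{2a+4}}$, and their ratio is at most $\frac{a+2}{a}\rho^{a+4}\le 5\cdot 0.6^{9/2}<0.51$, uniformly in $j\ge 1$ and $\rho\in(0,0.6]$; your method then closes with no special case needed.
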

This means that one can minorize the product $\tilde{u}_j$ by 
$\chi _j:=\min (A_j(0.6),B_j(0.6))$. For $j=1$, $2$, $3$ and $4$, the values of 
$\chi _j$ are respectively
$$0.1749135662\ldots ~,~0.7772399345\ldots ~,~
0.9492771959\ldots ~,~0.9889171980\ldots $$
(in all cases they equal $A_j(0.6)$). For $k\geq 16$, the factors $|u_k|$ can 
be minorized by $|1-|q|^{-7/2+k}|$, and then by $|1-0.6^{-7/2+k}|$. We set 
$\chi _5:=\prod _{k=16}^{\infty}|1-0.6^{-7/2+k}|=0.9957913379\ldots$. Thus we 
minorize $|U|$ by 
$\chi _0\cdot \chi _1\cdot \chi _2\cdot \chi _3\cdot \chi _4\cdot \chi _5$.   

To minorize the product $R$ one can observe that for $|z|=|q|^{-7/2}$, each 
number $1+q^{j-1}/z$ is of the form (\ref{eqform}) with $k\geq 7$. Therefore 
when minorizing $|R|$ one can use the same reasoning as for $|U|$ and 
obtain a minoration by $\chi _2\cdot \chi _3\cdot \chi _4\cdot \chi _5$ 
(the first value of the index $k$ being $7$, not $1$, one has to skip 
the analogs of the minorations of $|u_k|$ for $k=1$, $\ldots$, $6$, i.e. 
to skip $\chi _0$ and $\chi _1$). Set 
$\chi _*:=\chi _2\cdot \chi _3\cdot \chi _4\cdot \chi _5$. Thus one can 
minorize the product $|Q|\cdot |U|\cdot |R|$ by 

$$1.2\cdot \chi _0\cdot \chi _1\cdot \chi _*^2=0.1930636291\ldots >
0.1851580824\ldots \geq |G|$$
from which the lemma follows.

\begin{proof}[Proof of Lemma~\ref{lmAB}:]
One has $A_j^2=CEF$, where 

$$C:=1+\rho ^{6j+6}~~~,~~~E:=1+\rho ^{6j+4}-\sqrt{2}\rho ^{3j+2}~~~,~~~
F:=1+\rho ^{6j+2}-2\rho ^{3j+1}~.$$
For $\rho \in (0,0.6]$, each factor $C$, $E$ and $F$ is positive-valued. 
Clearly $E'=(3j+2)\rho ^{3j+1}(2\rho ^{3j+2}-\sqrt{2})<0$, because 
$2\rho \leq 1.2<\sqrt{2}$. 
We show that $(CF)'<0$ from which and from $A_j>0$ follows that $A_j'<0$. 
A direct computation shows that 

$$\begin{array}{ccl}
(CF)'&=&\rho ^{3j}((6j+6)\rho ^{3j+5}+(6j+2)\rho ^{3j+1}+
(12j+8)\rho ^{9j+7}\\ \\ 
&&-(6j+2)-(18j+14)\rho ^{6j+6})~.\end{array}$$
For $j\geq 1$ and $\rho \in (0,0.6]$, the first three summands 
inside the brackets are majorized 
respectively by $(6j+6)\cdot 0.6^8$, $(6j+2)\cdot 0.6^4$ and 
$(12j+8)\cdot 0.6^{16}$. Their sum is less than $6j+2$, so $(CF)'<0$. 

We set $B_j^2=MNW$, where 

$$M:=(1-\rho ^{3j+1})^2~~~,~~~N:=1+\rho ^{6j+4}+\sqrt{2}\rho ^{3j+2}~~~,~~~
W:=(1-\rho ^{3j+3})^2~.$$
Obviously $W'<0$. We show that $(MN)'<0$ which together with $M>0$, 
$N>0$, $W>0$ and $B_j>0$ proves 
that $B_j'<0$. One has 
$(MN)'=(K_1+K_2-L_1)+(K_3+K_4-L_2)+(K_5-L_3)$, where
$$\begin{array}{lccclc}
K_1:=(6j+2)\rho ^{6j+1}&,&K_2:=\sqrt{2}(3j+2)\rho ^{3j+1}&,&
L_1:=(6j+2)\rho ^{3j}&,\\ \\ 
K_3:=(6j+4)\rho ^{6j+3}&,&K_4:=\sqrt{2}(9j+4)\rho ^{9j+3}&,&
L_2:=2\sqrt{2}(6j+3)\rho ^{6j+2}&,\\ \\ 
K_5:=(12j+6)\rho ^{12j+5}&&{\rm and}&&L_3:=(18j+10)\rho ^{9j+4}&.\end{array}$$
The inequality $K_5<L_3$ is evident. One has 
$K_4/L_2\leq (3/4)\rho ^{3j+1}\leq (3/4)\cdot 0.6^4=0.0972$ and 
$K_3/L_2\leq (10/9)\cdot (0.6/2\sqrt{2})=0.2357022603\ldots$, so 
$K_3+K_4<L_2$. Finally, $K_1/L_1\leq 0.6^{3j+1}\leq 0.6^4=0.1296$ and 
$K_2/L_1\leq \sqrt{2}\cdot 0.6=0.8485281372\ldots$ hence $K_1+K_2<L_1$ 
and $(MN)'<0$.
\end{proof}

\section{Proof of Lemma~\protect\ref{lmk1}
\protect\label{secprlmk1}}

We set 
$\theta ^{\dagger}(q,z):=\theta (q,z/q)=
\sum _{j=0}^{\infty}q^{j(j-1)/2}z^j=1+z+qz^2+q^3z^3+\cdots$. We show that 
$\theta ^{\dagger}(q,z)\neq 0$ for $|z|=|q|^{-1/2}$ from which the lemma follows. 
As $\theta ^{\dagger}(\bar{q},\bar{z})=\overline{\theta ^{\dagger}(q,z)}$, we 
consider only the case $\arg (q)\in [\pi /2,\pi]$. We assume that 
$|q|\in [c_0,0.6]$, because for $|q|\leq c_0$, the zeros of $\theta$ 
are strongly separated in modulus, see~\cite{Ko8}. Set

$$b_0:=1~~~,~~~b_1:=z~~~,~~~b_2:=qz^2~~~{\rm and}~~~
r_k:=\sum _{j=k}^{\infty}q^{j(j-1)/2}z^j~.$$
To show 
that $\theta ^{\dagger}\neq 0$ we prove that the modulus of the sum of some 
or all of the terms $b_0$, $b_1$ and $b_2$  
(we denote the set of the chosen terms by $S$) 
is larger than the modulus of the sum 
of the remaining terms of the series of $\theta ^{\dagger}$. 
We distinguish the following cases according to the intervals to which 
$\arg (z)$ and $\arg (q)$ belong:
\vspace{1mm} 

{\bf Case 1)} Re\,$z\geq 0$, i.e. $\arg (z)\in [-\pi /2,\pi /2]$, and 
$\arg (q)\in [\pi /2,\pi ]$. 
We set $S:= \{ b_0,b_1\}$. One has 

\begin{equation}\label{EQ1}
|r_2|\leq 
\sum _{j=2}^{\infty}|q|^{j(j-1)/2-j/2}=:\Phi _{\flat}(|q|)~~~\, ,~~~\, 
|r_3|\leq \sum _{j=3}^{\infty}|q|^{j(j-1)/2-j/2}=\Phi _{\flat}(|q|)-1
\end{equation}
and 

$$|1+z|\geq (1+|z|^2)^{1/2}=(1+|q|^{-1})^{1/2}=:\Phi _*(|q|)>\Phi _{\flat}(|q|)~.$$
The last inequality follows from $\Phi _*(|q|)$ and $\Phi _{\flat}(|q|)$ 
being respectively decreasing and increasing on $[c_0,0.6]$ and 

$$\Phi _*(0.6)=1.632993162\ldots >1.618354488\ldots =\Phi _{\flat}(0.6)~.$$

{\bf Case 2)} $\arg (z)\in [\pi ,3\pi /2]$ and 
$\arg (q)\in [\pi /2,\pi ]$. Then 
$\arg (qz)\in [3\pi /2,5\pi /2]$, i.e. Re\,$(qz)\geq 0$. 
We set $S:=\{ b_1,b_2 \}$. Hence 

$$|z+qz^2|=|z|\cdot |1+qz|\geq |q|^{-1/2}\cdot (1+|q|)^{1/2}=\Phi _*(|q|)~.$$
The other terms of $\theta ^{\dagger}$ 
are $1$, $q^3z^3$, $q^6z^4$, $\ldots$. For $|z|=|q|^{-1/2}$, 
their moduli equal respectively $1$, $|q|^{3/2}$, $|q|^4$, $\ldots$, 
which are precisely the terms of the series $\Phi _{\flat}$, so as in Case 1) 
one concludes that $|z+qz^2|\geq \Phi _*(|q|)>\Phi _{\flat}(|q|)\geq 1+|r_3|$. 
\vspace{1mm}

{\bf Case 3)} $\arg (z)\in [\pi /2,3\pi /4]$ and $\arg (q)\in [\pi /2,\pi ]$. 
The case is subdivided into five subcases in all of which we set 
$S:=\{ b_0,b_1,b_2\}$. 
\vspace{1mm}

{\bf Case 3A)} $\arg (z)\in [\pi /2,3\pi /4]$ and $\arg (q)\in [3\pi /4,\pi ]$. 
Then $\arg (qz^2)\in [7\pi /4,5\pi /2]$. If $\arg (qz^2)\in [2\pi ,5\pi /2]$, 
then 

$$\begin{array}{lllllllll}
|1+z+qz^2|&\geq&{\rm Im}\, (1+z+qz^2)&\geq& 
{\rm Im}\, (z)&\geq&\sin (3\pi /4)\cdot |q|^{-1/2}&\geq&\\ \\ 
(2\cdot 0.6)^{-1/2}&=&
0.9128709292\ldots &>&0.618354488\ldots &=&\Phi _{\flat}(0.6)-1&\geq&|r_3|~,
\end{array}$$ 
see (\ref{EQ1}). Set $\tau (|q|):=(2\cdot |q|)^{-1/2}-2^{-1/2}=
\sin (3\pi /4)\cdot |q|^{-1/2}+\sin (7\pi /4)=
-\cos (3\pi /4)\cdot |q|^{-1/2}-\cos (7\pi /4)$. 
If $\arg (qz^2)\in [7\pi /4,2\pi ]$, then 

$${\rm Im}\, (1+z+qz^2)\geq \tau (|q|)~~~{\rm and}~~~
{\rm Re}\, (1+z+qz^2)\geq 1-\tau (|q|)$$
(one can observe that $1-\tau (|q|)>0$ for $q\geq c_0$). Thus 

$$|1+z+qz^2|\geq (\tau ^2+(1-\tau )^2)^{1/2}=((1-2\tau )^2/2+1/2)^{1/2}~.$$
The latter quantity is minimal for $\tau =1/2$, i.e. for 
$|q|=0.3431457506\ldots$, when it equals 
$1/\sqrt{2}=0.7071067814\ldots >\Phi _{\flat}(0.6)-1\geq |r_3|$. 
\vspace{1mm}

{\bf Case 3B)} $\arg (z)\in [5\pi /8,3\pi /4]\subset [\pi /2,3\pi /4]$ and 
$\arg (q)\in [\pi /2,3\pi /4]$. 
Then $\arg (qz^2)\in [7\pi /4,9\pi /4]\subset [7\pi /4,5\pi /2]$ 
and one proves as in Case 3A) that $|1+z+qz^2|>|r_3|$.
\vspace{1mm}

{\bf Case 3C)} $\arg (z)\in [\pi /2,5\pi /8]$ and 
$\arg (q)\in [5\pi /8,3\pi /4]$. Then $\arg (qz^2)\in [13\pi /8,2\pi ]$. 
If $|q|\in [0.3,0.6]$, then  

$$\begin{array}{lllll}
|1+z+qz^2|&\geq& 
{\rm Re}\, (1+z+qz^2)&\geq&1+|q|^{-1/2}\cdot \cos (5\pi /8)+\cos (13\pi /8)\\ \\ 
&\geq&1+0.3^{-1/2}\cdot \cos (5\pi /8)+\cos (13\pi /8)&>&0.68~>~\Phi _{\flat}(0.6)-1
~\geq ~|r_3|~.\end{array}$$
If $|q|\in [c_0,0.3]$, then 

$$\begin{array}{llll}
&1+|q|^{-1/2}\cdot \cos (5\pi /8)+\cos (13\pi /8)&\geq& 
1+c_0^{-1/2}\cdot \cos (5\pi /8)+\cos (13\pi /8)\\ \\ \geq&0.5433422972\ldots &>&
\Phi _{\flat}(0.3)-1~=~0.1725370862\ldots ~\geq ~|r_3|~.\end{array}$$

{\bf Case 3D)} $\arg (z)\in [9\pi /16,5\pi /8]$ and 
$\arg (q)\in [\pi /2,5\pi /8]$. Then $\arg (qz^2)\in [13\pi /8,15\pi /8]$. 
Just as in Case 3C) one obtains $|1+z+qz^2|>|r_3|$. 
\vspace{1mm}

{\bf Case 3E)} $\arg (z)\in [\pi /2,9\pi /16]$ and 
$\arg (q)\in [\pi /2,5\pi /8]$. Then $\arg (qz^2)\in [3\pi /2,7\pi /4]$ and

$$\begin{array}{llllllll}
{\rm Re}\, (1+z+qz^2)&\geq &1+|q|^{-1/2}\cdot \cos (9\pi /16)&\geq& 
1+c_0^{-1/2}\cdot \cos (9\pi /16)&>&0.5712&,\\ \\ 
{\rm Im}\, (1+z+qz^2)&\geq &|q|^{-1/2}\cdot \sin (9\pi /16)-1&\geq&
0.6^{-1/2}\cdot \sin (9\pi /16)-1&>&0.2661&,\end{array}$$
so $|1+z+qz^2|>(0.5712^2+0.2661^2)^{1/2}>0.63>\Phi _{\flat}(0.6)-1\geq |r_3|$.
\vspace{1mm}

{\bf Case 4)} $\arg (z)\in [3\pi /4,\pi ]$ and 
$\arg (q)\in [\pi /2,\pi ]$. Then $\arg (qz^2)\in [2\pi ,3\pi ]$, i.e. 
Im\,$(qz^2)\geq 0$. We consider four subcases in all of which we set 
$S:=\{ b_0,b_1,b_2\}$:
\vspace{1mm}

{\bf Case 4A)} $\arg (z)\in [3\pi /4,5\pi /6]$ and 
$\arg (q)\in [\pi /2,\pi ]$. In this case 
Im\, $(z)\geq 0.6^{-1/2}\cdot \sin (5\pi /6)$ and 

$$|1+z+qz^2|\geq {\rm Im}\, 
(1+z+qz^2)\geq 0.6^{-1/2}\cdot \sin (5\pi /6)=0.64\ldots 
>\Phi _{\flat}(0.6)-1\geq |r_3|~.$$

{\bf Case 4B)} $\arg (z)\in [5\pi /6,11\pi /12]$ and 
$\arg (q)\in [\pi /2,\pi ]$. Hence $\arg (qz^2)\in [13\pi /6,17\pi /6]$ 
(with $\sin (13\pi /6)=\sin (17\pi /6)=1/2$) and 

$$|1+z+qz^2|\geq {\rm Im}\, (1+z+qz^2)\geq 0.6^{-1/2}\cdot \sin (11\pi /12)+
\sin (13\pi /6)=0.83\ldots >\Phi _{\flat}(0.6)-1\geq |r_3|~.$$

{\bf Case 4C)} $\arg (z)\in [11\pi /12,\pi ]$ and 
$\arg (q)\in [\pi /2,5\pi /6]$ hence $\arg (qz^2)\in [7\pi /3,17\pi /6]$. 
If $\arg (qz^2)\in [7\pi /3,5\pi /2]$, then 

$$|1+z+qz^2|\geq {\rm Im}\, (1+z+qz^2)\geq {\rm Im}\, (qz^2)\geq \sin (7\pi /3)
=\sqrt{3}/2=
0.86\ldots >\Phi _{\flat}(0.6)-1\geq |r_3|~.$$
If $\phi :=\arg (qz^2)\in [5\pi /2,17\pi /6]$, then 

$$\begin{array}{llllll}
{\rm Im}\, (1+z+qz^2)&\geq&{\rm Im}\, (qz^2)&=&\sin (\phi )&,\\ \\ 
|{\rm Re}\, (1+z+qz^2)|&&&\geq&|0.6^{-1/2}\cdot \cos (11\pi /12)+1+\cos (\phi )|&
{\rm and}\\ \\ 
|1+z+qz^2|&\geq&T(\phi )&:=&(\sin ^2(\phi )+
(0.6^{-1/2}\cdot \cos (11\pi /12)+1+\cos (\phi ))^2)^{1/2}&.\end{array}$$
The function $T(\phi )$ takes only values larger than $1$ (hence larger than 
$|r_3|$) for $\phi \in [5\pi /2,17\pi /6]$. 
\vspace{1mm}

{\bf Case 4D)} $\arg (z)\in [11\pi /12,\pi ]$ and 
$\arg (q)\in [5\pi /6,\pi ]$, so $\arg (qz^2)\in [8\pi /3,3\pi ]$ and 
$${\rm Re}\, (1+z+qz^2)\leq 0.6^{-1/2}\cdot \cos (11\pi /12)+1+\cos (8\pi /3)
=-0.7470048804\ldots ~.$$
Hence $|1+z+qz^2|\geq |{\rm Re}\, (1+z+qz^2)|>\Phi _{\flat}(0.6)-1\geq |r_3|$.

\section{Proof of Lemma~\protect\ref{lmk2}
\protect\label{secprlmk2}}

We set $z:=q\xi$ hence $|\xi |=|q|^{-3/2}$. 
Thus $\theta =\sum _{j=0}^{\infty}q^{j(j-1)/2}\xi ^j$. 
Next we set $A:=1+\sum _{j=4}^{\infty}q^{j(j-1)/2}\xi ^j$ and $B:=1+q\xi +q^3\xi ^2$, 
so $\theta =A+\xi B$. 
Finally we set $\xi :=q\zeta$, thus $B=1+\zeta +q\zeta ^2$ and 
$|\zeta |=|q|^{-1/2}$. 

\begin{lm}\label{lmABq}
(1) For $|q|\leq 0.6$ and $|\xi |\leq |q|^{-3/2}$, one has 
$|A|\leq a_0:=1+\sum _{j=4}^{\infty}0.6^{j(j-1)/2-3j/2}=2.330487021\ldots$.

(2) For $|q|\leq 0.6$, $\arg (q)\in [2\pi /3,\pi ]$ and 
$|\xi |\leq |q|^{-3/2}$, one has 
$|\xi B|>a_0\geq |A|$ hence $\theta \neq 0$. 

(3) For $|q|\leq 0.55$ and $|\xi |\leq |q|^{-3/2}$, one has 
$|\xi B|>a_0\geq |A|$ hence $\theta \neq 0$. 
\end{lm}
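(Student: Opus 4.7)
The plan is to dispatch Part~(1) by a one-step triangle inequality and to reduce Parts~(2) and~(3) to the lower bound $|B|>a_0|q|^{3/2}$, which is then verified by a case analysis on $\arg(q)$ and $\arg(\zeta)$.

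For Part~(1), applying the triangle inequality term-by-term and using $|\xi|\le|q|^{-3/2}$ gives
\[
|q^{j(j-1)/2}\xi^j|\le|q|^{j(j-1)/2-3j/2}=|q|^{j(j-4)/2}\qquad (j\ge 4).
\]
The exponent $j(j-4)/2$ is nonnegative, so each summand is non-decreasing in $|q|$ on $[0,1]$ and hence bounded by its value at $|q|=0.6$. Summing over $j\ge 4$ and adding the leading $1$ yields exactly $a_0$; convergence is immediate from the quadratic growth of the exponents.

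For Parts~(2) and~(3), the key reduction is that on the circle $|\xi|=|q|^{-3/2}$ one has $|\xi B|=|q|^{-3/2}|B|$, so the inequality $|\xi B|>a_0$ is equivalent to $|B|>a_0|q|^{3/2}$. With $|\zeta|=|q|^{-1/2}$ and $|q\zeta^2|=|q|\cdot|q|^{-1}=1$ I rewrite
\[
B=1+|q|^{-1/2}e^{i\phi}+e^{i(\alpha+2\phi)},\qquad \alpha:=\arg(q),\ \phi:=\arg(\zeta),
\]
and use the conjugation symmetry $\theta(\bar q,\bar z)=\overline{\theta(q,z)}$ to restrict to $\alpha\in[\pi/2,\pi]$. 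Part~(2) (range $\alpha\in[2\pi/3,\pi]$, $|q|\le 0.6$) has numerical target $a_0\cdot 0.6^{3/2}\approx 1.08$; Part~(3) then follows by invoking Part~(2) for $\alpha\in[2\pi/3,\pi]$ (since $0.55<0.6$) and treating the remaining subrange $\alpha\in[\pi/2,2\pi/3]$ under the tighter hypothesis $|q|\le 0.55$, which both drops the target to $a_0\cdot 0.55^{3/2}\approx 0.95$ and enlarges the middle modulus $|q|^{-1/2}\ge 0.55^{-1/2}\approx 1.35$, supplying the slack needed for the wider angular range. In each subrange I would partition $\phi\in[0,2\pi)$ into a handful of subarcs, in the style of Section~\ref{secprlmk1}, and on each subarc bound $|B|$ from below by one of $|\mathrm{Re}(B)|$, $|\mathrm{Im}(B)|$, or the triangle-inequality form $\bigl|\,|q|^{-1/2}-|1+q\zeta^2|\,\bigr|$, the correct choice being dictated by the location of $\alpha+2\phi$ modulo $2\pi$.

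The main obstacle is that the admissible minimum of $|B|$ sits only marginally above the threshold: a preliminary numerical scan indicates a gap of only a few thousandths for Part~(3) near $\alpha=\pi/2$, $\phi\approx 73^\circ$, and of only a few hundredths for Part~(2) near $\alpha\approx 2\pi/3$, $\phi\approx\pi$. The numerical bounds $0.6$ and $0.55$ are therefore close to sharp for this method, which forces the subarc partition to be fine and every numerical evaluation to be honest; no single estimate is difficult in isolation, but the bookkeeping is the real cost of the proof.
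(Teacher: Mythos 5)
Part (1) is correct and is the paper's argument. For Parts (2) and (3) your reduction is also the right first step: pass to $|B|>a_0|q|^{3/2}$ on $|\zeta|=|q|^{-1/2}$, use conjugation to restrict to $\arg(q)\in[\pi/2,\pi]$, and split Part (3) into the range already covered by Part (2) plus $\arg(q)\in[\pi/2,2\pi/3]$ with $|q|\le 0.55$. But the core estimate is only a plan, and the plan as stated cannot be executed with the tools you list. You yourself locate the extremal configuration for Part (3) at $\arg(q)=\pi/2$, $\arg(\zeta)\approx 73^{\circ}$, $|q|=0.55$, where $|B|\approx 0.953$ against the threshold $a_0\cdot 0.55^{3/2}\approx 0.951$. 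At that point one computes $|{\rm Re}\,(B)|\approx 0.84$, $|{\rm Im}\,(B)|\approx 0.45$ and $\bigl|\,|q|^{-1/2}-|1+q\zeta ^2|\,\bigr|\approx 0.39$: each of your three proposed minorants is far below the threshold, so no subarc containing or approaching this point can be handled by any of them, however fine the partition. You would need two-sided control of both ${\rm Re}\,(B)$ and ${\rm Im}\,(B)$ accurate to a fraction of a percent and then combine them as $({\rm Re}^2+{\rm Im}^2)^{1/2}$ --- a different and much heavier argument than the one you describe.

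The paper avoids the case analysis entirely with one algebraic step you are missing. Writing $q=\rho e^{i\omega}$, $\zeta =\rho ^{-1/2}e^{i\psi}$ and using $|q\zeta ^2|=1$, one finds the exact identity
$$|B|^2=\rho ^{-1}+4\rho ^{-1/2}ab+4a^2~,\qquad a:=\cos (\psi +\omega /2)~,~~b:=\cos (\omega /2)~,$$
a quadratic in $a$ whose minimum over all real $a$ equals $(1-b^2)\rho ^{-1}$. Since $b\le 1/2$ for $\omega \in [2\pi /3,\pi ]$ and $b\le \sqrt{2}/2$ for $\omega \in [\pi /2,\pi ]$, this yields $|B|^2\ge 3/(4\rho )$ and $|B|^2\ge 1/(2\rho )$ respectively, hence $|\xi B|\ge \sqrt{3}\,\rho ^{-2}/2\ge 2.405\ldots$ for $\rho \le 0.6$ and $|\xi B|\ge \rho ^{-2}/\sqrt{2}\ge 2.337\ldots$ for $\rho \le 0.55$, both exceeding $a_0=2.330\ldots$ with no partition of $\psi$ at all. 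If you insist on the subarc strategy you must at least add the full modulus to your list of lower bounds and accept a very fine subdivision near the extremal point; the completed square is both shorter and the only reason the constants $0.55$ and $0.6$ clear the bar by the thin margins you observed.
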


\begin{rem}\label{remnum}
{\rm For $\arg (q)\in [\pi /2,2\pi /3]$, it is possible to show numerically 
that $|\xi B|>a_0\geq |A|$ also for $|q|\in [0.55,0.6]$. To this end one can 
compute the quantities $|\xi B|$ and $|A|$ with sufficiently small steps in 
$\arg (q)$ and $|q|$. To obtain a majoration of $|A|$ one can set 
$A:=A^*+A^{**}$ with $A^*:=1+\sum _{j=4}^{7}q^{j(j-1)/2}\xi ^j$ and 
$A^{**}:=\sum _{j=8}^{\infty}q^{j(j-1)/2}\xi ^j$. For $|\xi |=|q|^{-3/2}$ and 
$|q|\leq 0.6$, one has $|A^{**}|\leq \sum _{j=8}^{\infty}0.6^{j(j-1)/2-3j/2}=
0.0002925303367\ldots$. Thus there remains to estimate $|\xi B|$ and $|A^*|$ 
which contain finitely-many terms.}
\end{rem}

\begin{proof}[Proof of Lemma~\ref{lmABq}]
Part (1) follows from $|A|\leq 1+\sum _{j=4}^{\infty}|q|^{j(j-1)/2-3j/2}\leq a_0$. 
To prove parts (2) and (3) we set $q:=\rho e^{i\omega}$, $\rho \geq 0$, 
$\omega \in [\pi /2,\pi ]$, and 
$\zeta :=\rho ^{-1/2}e^{i\psi}$, $\psi \in [0,2\pi ]$. 
Observe that $|q\zeta ^2|=1$. Thus 

$$\begin{array}{ccl}
B&=&1+\rho ^{-1/2}e^{i\psi}+e^{i(2\psi +\omega )}~~~\, {\rm and}\\ \\ 
|B|^2&=&(1+\cos (2\psi +\omega )+\rho ^{-1/2}\cos \psi )^2+
(\sin (2\psi +\omega )+\rho ^{-1/2}\sin \psi )^2\\ \\ 
&=&2+\rho ^{-1}+2\rho ^{-1/2}\cos (\psi +\omega )+
2\cos (2\psi +\omega )+2\rho ^{-1/2}\cos \psi \\ \\ 
&=&2+\rho ^{-1}+4\rho ^{-1/2}\cos (\psi +\omega /2)\cos (\omega /2)+
2\cos (2\psi +\omega )\\ \\ 
&=&\rho ^{-1}+4\rho ^{-1/2}\cos (\psi +\omega /2)\cos (\omega /2)+
4\cos ^2(\psi +\omega /2)~.\end{array}$$
Set $a:=\cos (\psi +\omega /2)\in [-1,1]$ and 
$b:=\cos (\omega /2)\in [0,\sqrt{2}/2]$ (because 
$\omega /2\in [\pi /4,\pi /2]$). The polynomial $4a^2+4\rho ^{-1/2}ab+\rho ^{-1}$ 
(considered as a polynomial in $a$ with $b$ as a parameter) 
takes its minimal value for $a=-\rho ^{-1/2}b/2$. This value is 
$(1-b^2)\rho ^{-1}$. Therefore for $b\leq 1/2$ 
(hence for $\omega \in [2\pi /3,\pi ]$), one has 
$|B|^2\geq 3\cdot \rho ^{-1}/4\geq 3\cdot 0.6^{-1}/4$. So for 
$|\xi |=|q|^{-3/2}\geq 0.6^{-3/2}$, one has 

$$|\xi B|=|q|^{-3/2}\cdot |B|\geq 0.6^{-3/2}\cdot (3\cdot 0.6^{-1}/4)^{1/2}=
2.405626123\ldots >a_0\geq |A|~.$$
For $\rho =|q|\leq 0.55$, one has $|B|^2\geq \rho ^{-1}/2$ and 
$|\xi B|\geq 0.55^{-2}/\sqrt{2}=2.337543079\ldots >a_0\geq |A|$. 
\end{proof}


\begin{thebibliography}{40}
\bibitem{Ko2} V.P. Kostov,
On the zeros of a partial theta function, 
Bull. Sci. Math. 137:8 (2013), 1018-1030.
\bibitem{Ko4} V.P. Kostov, 
On the spectrum of a partial theta function, 
Proc. Royal Soc. Edinb. A 144:5 (2014), 925-933.
\bibitem{Ko8} V.P. Kostov, A separation in modulus property of the 
zeros of a partial theta function, Analysis Mathematica 44 (2018), no. 4, 
501-519.  
arxiv:1704.01901.
\bibitem{KoSh} V.P. Kostov, B. Shapiro, 
Hardy-Petrovitch-Hutchinson's 
problem and partial theta function, 
Duke Math. J. 162:5 (2013), 825-861. 
arXiv:1106.6262v1[math.CA].
\end{thebibliography}
\end{document}